\documentclass[12pt]{l4dc2020} 
%\documentclass[12pt]{jmlr} 

% The following packages will be automatically loaded:
% amsmath, amssymb, natbib, graphicx, url, algorithm2e

% Additional packages
%\usepackage{amsmath}
%\usepackage{amssymb}
%\usepackage{amsthm}
%\usepackage{enumerate}
%\usepackage{mathtools}
%\usepackage{xcolor}

\usepackage{float}

\newcommand{\xresp}{\Phi_x}
\newcommand{\uresp}{\Phi_u}
\newcommand{\xrespzt}{\mathbf{\Phi_x}}
\newcommand{\urespzt}{\mathbf{\Phi_u}}
\newcommand{\xzt}{\textbf{x}}
\newcommand{\uzt}{\textbf{u}}
\newcommand{\wzt}{\textbf{w}}

\newcommand{\Deltazt}{\mathbf{\Delta}}

\newcommand{\xfirzt}{\mathbf{\Phi_x}}
\newcommand{\ufirzt}{\mathbf{\Phi_u}}

\newcommand{\xfirstk}{\mathbf{\bar{\Phi}_x}}
\newcommand{\ufirstk}{\mathbf{\bar{\Phi}_u}}

\newcommand{\xuresp}{\mathbf{\bar{\Phi}}}

\newcommand{\firl}{F}

\newcommand{\dumtf}{H}
\newcommand{\dumtfzt}{\textbf{H}}

\newcommand{\cert}{P}

\newcommand{\htwocost}{J_{\htwo}}

\newcommand{\kzt}{\textbf{K}}

\newcommand{\gpoli}{\genpolicy_1}
\newcommand{\gpolii}{\genpolicy_2}

\newcommand{\timexp}{{T_e}}

\newcommand{\cloop}{\mathcal{S}_\textup{CL}}

\newcommand{\mset}{\matrixmodelset}

\newcommand{\bigo}{\mathcal{O}}

\newcommand{\timeh}{N}

\newcommand{\robustcost}{J_{\infty}}
\newcommand{\robustcostopt}{J_{\infty}^*}

\newcommand{\uncertlin}{\uncert_{\ell}}

\newcommand{\xulp}{\mathbf{\bar{\Phi}}_{\text{nom}}}

% Basic maths
%------------------------------------------

\newcommand{\real}{\mathbb{R}}
\newcommand{\sym}[1]{\mathbb{S}^{#1}}

\newcommand{\naturals}{\mathbb{N}}

\newcommand{\kron}{\otimes}

\newcommand{\norm}[2]{\left\Vert#1\right\Vert_{#2}}
\newcommand{\normal}[2]{\mathcal{N}\left({#1},{#2}\right)}

\newcommand{\ev}{\mathbb{E}}

\newcommand{\lbr}{\lbrace}
\newcommand{\rbr}{\rbrace}

\newcommand{\mle}{\preceq}

\newcommand{\mge}{\succeq}

\newcommand{\hinf}{\mathcal{H}_{\infty}}
\newcommand{\htwo}{\mathcal{H}_{2}}
\newcommand{\rhinf}{\mathcal{R}\mathcal{H}_{\infty}}

\newcommand{\data}{\mathcal{D}}
\newcommand{\model}{\mathcal{M}_{\credprob}}
\newcommand{\datainit}{\mathcal{D}_0}

\newcommand{\amodel}{\tilde{\mathcal{M}}}

\newcommand{\matrixmodelset}{{\Theta_m}}

\newcommand{\anom}{\hat{A}}
\newcommand{\bnom}{\hat{B}}
\newcommand{\uncert}{D}

\newcommand{\auncert}{\tilde{\uncert}}

\newcommand{\multiplier}{\lambda}

\newcommand{\trnsp}{^\top}

\newcommand{\lmia}{\mathcal{A}}
\newcommand{\lmib}{\mathcal{B}}
\newcommand{\lmic}{\mathcal{C}}
\newcommand{\lmid}{\mathcal{P}}
\newcommand{\lmif}{\mathcal{F}}
\newcommand{\lmig}{\mathcal{G}}
\newcommand{\lmih}{\mathcal{H}}

% Misc.
%------------------------------------------
\newcommand{\st}{\textup{s.t.}}
\newcommand{\defeq}{:=}
\newcommand{\eqdef}{=:}

\newcommand{\mysec}{\S}

 % discussion color
 % discussion color
 % discussion color

 % discussion color
 % discussion color
 % discussion color

% Systems
%------------------------------------------

% Hamiltonian stuff
%------------------------------------------

%\newcommand{\inertia}{m}

% Distributions
%------------------------------------------

\newcommand{\chiSquare}[2]{\chi^2_{#1}(#2)}

% data
%------------------------------------------

% optimization/control 
%------------------------------------------

%\newcommand{\cost}{V}

%\newcommand{\bw}{B_w}

%\newcommand{\atr}{\hat{A}}
%\newcommand{\btr}{\hat{B}}
%\newcommand{\bwtr}{\hat{\bw}}
\newcommand{\atr}{{A_\textup{tr}}}
\newcommand{\btr}{{B_\textup{tr}}}

%\newcommand{\paramtr}{{\theta_\textup{tr}}}

% Data
%------------------------------------------

%\newcommand{\rolis}{{(\roli)}}

% LQR
%------------------------------------------

\newcommand{\diststd}{\sigma_w}

\newcommand{\credconst}{c_{\credprob}}

\newcommand{\modelx}{X}

\newcommand{\credprob}{\delta}

% used in the notes

\newcommand{\qcost}{Q}
\newcommand{\rcost}{R}
\newcommand{\cost}{c}

\newcommand{\totaltime}{T}

\newcommand{\genpolicy}{\phi}

% used in the paper
\newcommand{\policy}{\mathcal{K}}
%\newcommand{\cost}{J} % local cost

 % confidence region cost
 % mc approximation

 % separate the K and theta in cost 

% Change of variables

% Stochastics
%------------------------------------------

% Methods
%------------------------------------------

% SLS
%------------------------------------------

\title[Optimistic dual control]{Optimistic robust linear quadratic dual control}
\usepackage{times}
% Use \Name{Author Name} to specify the name.
% If the surname contains spaces, enclose the surname
% in braces, e.g. \Name{John {Smith Jones}} similarly
% if the name has a "von" part, e.g \Name{Jane {de Winter}}.
% If the first letter in the forenames is a diacritic
% enclose the diacritic in braces, e.g. \Name{{\'E}louise Smith}

% Two authors with the same address
% \coltauthor{\Name{Author Name1} \Email{abc@sample.com}\and
%  \Name{Author Name2} \Email{xyz@sample.com}\\
%  \addr Address}

% Three or more authors with the same address:
% \coltauthor{\Name{Author Name1} \Email{an1@sample.com}\\
%  \Name{Author Name2} \Email{an2@sample.com}\\
%  \Name{Author Name3} \Email{an3@sample.com}\\
%  \addr Address}

% Authors with different addresses:
\author{%
 \Name{Jack Umenberger} \Email{umnbrgr@mit.edu}\\
 \addr CSAIL, Massachusetts Institute of Technology, Cambridge, MA 02139
 \AND
 \Name{Thomas B. Sch\"on} \Email{thomas.schon@it.uu.se}\\
 \addr Department of Information Technology, Uppsala University, Uppsala, 75236, Sweden%
% \AND
% \Name{some others?} \Email{mail}\\
% \addr address%
}

\begin{document}

\maketitle

\vspace{-2em}

\begin{abstract}%
Recent work by \cite{mania2019certainty} has proved that certainty equivalent control achieves nearly optimal regret for linear systems with quadratic costs.
However, when parameter uncertainty is large, certainty equivalence cannot be relied upon to stabilize the true, unknown system.
In this paper, we present a dual control strategy that attempts to combine the performance of certainty equivalence, with the practical utility of robustness.
The formulation preserves structure in the representation of parametric uncertainty, which allows the controller to target reduction of uncertainty in the parameters that `matter most' for the control task, while robustly stabilizing the uncertain system.
Control synthesis proceeds via convex optimization, and the method is illustrated on a numerical example.
\end{abstract}

\begin{keywords}%
Dual control, linear systems, convex optimization%
\end{keywords}

\section{Introduction}
Since the initial formulation of the `dual control' problem by \cite{feldbaum1960dual} in the 1960s, learning to make decisions in uncertain and dynamic environments has remained a topic of sustained research activity.
However, recent years have witnessed a resurgence of interest in such problems, inspired perhaps in part by the dramatic success of reinforcement learning, cf. \cite{mnih2015human, silver2016mastering}.
Specifically, linear systems with quadratic costs, a.k.a. `the linear quadratic regulator', have been the subject of intense recent study, cf. \cite{matni2019self}.
Such research typically focuses on two main aspects:
i) performance, usually measured in terms of bounds on regret, and
ii) robustness, i.e., stability of the closed-loop system, which is often important in practical applications.
Concerning the former, the work of \cite{mania2019certainty} has proved that `certainty equivalent' (CE) control (nearly) achieves the optimal regret bound; provided that this controller stabilizes the system, which is the case when parameter uncertainty is sufficiently small.
Inspired by this result, the present paper attempts to combine the performance of certainty equivalence with the practical advantages of robustness.
Specifically, we propose a dual control strategy, for linear systems with quadratic costs, that optimizes for performance of the nominal, i.e., most likely, system (as in CE control), while robustly stabilizing the system in the presence of parametric uncertainty.
The dual controller performs `targeted exploration', attempting to reduce uncertainty in the parameters that `matter most' for control, while balancing the exploration-exploitation tradeoff.

The contributions of this paper are twofold.
In \mysec\ref{sec:robust_synthesis}, we present a convex formulation of optimization of quadratic cost, for a nominal linear system, subject to robust stability guarantees under parametric uncertainty. 
This extends the existing system level synthesis (SLS) framework, 
cf. \cite{wang2019system}, 
by preserving structure in the representation of system uncertainty.
In \mysec\ref{sec:dual_synthesis}, we build upon this formulation to present an (approximate) dual control strategy, exploiting the preservation of structure to perform  exploration that targets uncertainty reduction in the specific parameters that are `preventing' certainty equivalent control from stabilizing the uncertain true system.

\paragraph{Related work}
%\section{Related work}
Of greatest relevance to the present paper is the work of \cite{mania2019certainty}, which proves that certainty equivalence, i.e., estimating model parameters via online least squares and then applying LQR, achieves (nearly optimal) $\tilde{\bigo}(\sqrt{T})$ regret.
This result holds when the parameter error is sufficiently small so as to ensure closed-loop stability of the true system, which does not always hold in practice, e.g., \cite{dean2017sample}. Many recent works have addressed the issue of robustness in adaptive control, cf. \cite{dean2017sample,dean2018regret,dean2018safely,cohen2018online}.
% - something about regret bounds?
The work of \cite{umenberger2019robust}, cf. also \cite{ferizbegovic2019learning,iannelli2019structured}, attempts to do `targeted-exploration' by prioritizing uncertainty reduction in the system parameters to which performance is most sensitive. 
These methods consider worst-case costs to bound performance on the true system, and as such, can be conservative in practice.
It is the ambition of this paper the benefits of `targeted exploration' with a more `optimistic' CE strategy, that optimizes for performance of the nominal, rather than worst-case, system.
Other recent work on adaptive linear quadratic control includes Thompson sampling (e.g. \cite{ouyang2017learning,abeille2017thompson,abeille2018improved}), model-free (e.g. \cite{fazel2018global,malik2018derivative}) and partially model-free (e.g. \cite{agarwal2019online,agarwal2019logarithmic}) methods, as well as the `optimism in the face of uncertainty' heuristic (e.g. \cite{abbasi2011regret,ibrahimi2012efficient,faradonbeh2017finite}).

\section{Problem statement}\label{sec:problem}

In this section we describe in detail the problem addressed in this paper.
Notation is largely standard.
%$A\trnsp$ denotes the transpose of a matrix $A$.
%$x_{1:n}$ is shorthand for the sequence $\lbr x_t\rbr_{t=1}^n$.
%$\maxeig(A)$ denotes the maximum eigenvalue of a matrix $A$.
$\kron$ denotes the Kronecker product.
%$\vectrz{A}$ stacks the columns of $A$ to form a vector.
$\sym{n}$ denotes the space of $n\times n$ symmetric matrices.
%$\sym{n}_{+}$ ($\sym{n}_{++}$) denotes the cone(s) of $n\times n$ symmetric positive semidefinite (definite) matrices.
w.p. means `with probability'.
$\chiSquare{n}{p}$ denotes the value of the Chi-squared distribution with $n$ degrees of freedom and probability $p$.
%$\blkdiag$ is the block diagonal operator.
%The space of real, proper transfer matrices is denoted $\rhinf$.
%The space of strictly proper transfer matrices is denoted $\frac{1}{z}\rhinf$.
The space of real, proper (strictly proper) transfer matrices is denoted $\rhinf$ ($\frac{1}{z}\rhinf$).
With some abuse of notation, $[t_1,t_2]$ for $t_1,t_2\in\naturals$ denotes $\lbrace t_1,\dots,t_2\rbrace$.

\paragraph{Dynamics and modeling}
We are concerned with control of linear time-invariant systems
\begin{align}
x_{t+1} = A x_t + B u_t + w_t, \quad
w_t \sim \normal{0}{\diststd^2I_{n_x}}, \quad x_0 = 0,
\label{eq:lti}
\end{align}
where $x_t \in \real^{n_x}$, $u_t \in \real^{n_u}$ and $w_t \in \real^n$ denote the state (which is assumed to be directly measurable), input and process noise, respectively, at time $t$. 
We assume that the true parameters $\lbr \atr,\btr\rbr$ are unknown; as such, all knowledge about the true system dynamics must be inferred from observed data, 
$\data_n \defeq \lbrace x_t,u_t\rbrace_{t=1}^n$.
We assume that $\diststd$ is known, or has been estimated, and that we have access to initial data, denoted (with slight notational abuse) $\datainit$, obtained, e.g. during a preliminary experiment. 
Given data $\data_n$ we define a model $\model(\data_n) = \lbr \anom,\bnom,\uncert\rbr$,
where $(\anom,\bnom) \defeq \arg\min_{A,B} \sum_{t=1}^{n-1}|x_{t+1}-Ax_t-Bu_t|^2$ denote \emph{nominal} parameters given by the ordinary least squares estimates of $(\atr,\btr)$, and $\uncert\in\sym{n_x+n_u}$ is a matrix that quantifies the uncertainty in our nominal parameter estimate.
Specifically, given a user-specified tolerance $0<\credprob<1$, $\uncert \defeq \frac{1}{\diststd^2\credconst}\sum_{t=1}^{n-1}
\left[\begin{array}{c}
	x_t \\ u_t
\end{array}\right]
\left[\begin{array}{c}
	x_t \\ u_t
\end{array}\right]\trnsp$,
where $\credconst = \chiSquare{n_x^2+n_xn_u}{\credprob}$, defines a $1-\credprob$ probability credibility region for the true parameters as follows:

\begin{lemma}[\cite{umenberger2019robust}]\label{lem:credibility_region}
Given data $\data_n$ from \eqref{eq:lti} with true parameters $A=\atr$ and $B=\btr$, and a user-specified $0<\credprob<1$, define the set
\begin{equation}\label{eq:spectral_region}
\matrixmodelset(\model(\data_n)) \defeq \lbr A, \ B \ : \ \modelx\trnsp\uncert \modelx \mle I, \ \modelx = [\anom - A, \ \bnom - B]\trnsp \rbr.
\end{equation}
where $\lbr \anom,\bnom,\uncert\rbr = \model(\data_n)$. 
Then
$\lbr\atr, \ \btr\rbr\in \matrixmodelset(\model)$ w.p. $1-\credprob$.
\end{lemma}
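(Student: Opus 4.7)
My plan is to recognize $\modelx^\top\uncert\modelx$ as the normalized quadratic form in the ordinary least-squares residual, and to bound its spectrum via its trace, which is (essentially) Chi-squared distributed with the $n_x^2 + n_x n_u$ degrees of freedom encoded in $\credconst$. The argument splits into an algebraic rearrangement, a trace-dominance step, and a concentration step.

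For the algebraic step, I would stack \eqref{eq:lti} as $X_+ = \Phi[\atr,\btr]^\top + W$, where $\Phi$ collects the regressors $[x_t^\top,u_t^\top]$ as rows and $W$ collects the noises $w_t^\top$ as rows. The normal equations give $\modelx = Z_n^{-1}\Phi^\top W$ with $Z_n \defeq \Phi^\top\Phi = \diststd^2\credconst\,\uncert$, so that $\modelx^\top\uncert\modelx = \tfrac{1}{\diststd^2\credconst}\,W^\top P_\Phi W$, where $P_\Phi \defeq \Phi Z_n^{-1}\Phi^\top$ is the orthogonal projector onto $\textup{range}(\Phi)$, of rank at most $n_x+n_u$.

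For the trace-dominance step, any positive semidefinite matrix $M$ satisfies $\lambda_{\max}(M) \le \trace{M}$, so the matrix inequality $\modelx^\top\uncert\modelx \mle I$ is implied by the scalar inequality $\trace{\modelx^\top\uncert\modelx} \le 1$. Using $\trace{W^\top P_\Phi W} = \vectrz{W}^\top(I_{n_x}\kron P_\Phi)\vectrz{W}$ and noting that $I_{n_x}\kron P_\Phi$ is an orthoprojector of rank $n_x(n_x+n_u) = n_x^2+n_xn_u$, if the entries of $W/\diststd$ were standard Gaussian independent of $\Phi$ this quadratic form would be exactly $\diststd^2\,\chi^2_{n_x^2+n_xn_u}$; the upper-$\credprob$ quantile choice $\credconst = \chiSquare{n_x^2+n_xn_u}{\credprob}$ would then immediately yield the required bound with probability at least $1-\credprob$.

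The main obstacle is that $\Phi$ depends on past noise through the closed-loop dynamics \eqref{eq:lti}, which forbids the direct Chi-squared conclusion above. I would resolve this in one of two ways, both consistent with \cite{umenberger2019robust}: \textbf{(a)} adopt a Bayesian reading of the statement by placing an improper flat prior on $(A,B)$, under which the posterior on $[A,B]^\top$ given $\data_n$ is matrix-normal with mean $[\anom,\bnom]^\top$ and column-covariance $\diststd^2 Z_n^{-1}$, so that $\trace{\modelx^\top Z_n\modelx}/\diststd^2 \sim \chi^2_{n_x^2+n_xn_u}$ \emph{exactly} under the posterior and the stated set is a genuine $1-\credprob$ credibility region; \textbf{(b)} alternatively, invoke a self-normalized martingale tail bound for $\Phi^\top W$ (of Abbasi--Yadkori type) to recover the same inequality up to universal constants that can be absorbed into $\credconst$. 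Either route closes the argument; the trace-dominance reduction is what makes the matrix inequality in \eqref{eq:spectral_region} accessible to a scalar Chi-squared bound in the first place.
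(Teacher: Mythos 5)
Your route \textbf{(a)} is exactly the paper's argument: the lemma is justified by noting that, under a uniform prior, the posterior over $(A,B)$ given $\data_n$ is Gaussian, so the $\chi^2_{n_x^2+n_xn_u}$ quantile defines an exact $1-\credprob$ credibility ellipsoid, and your trace-dominance step $\trace{\modelx\trnsp\uncert\modelx}\le 1 \Rightarrow \modelx\trnsp\uncert\modelx\mle I$ is the same reduction used to pass to the stated matrix inequality. Note only that route \textbf{(b)} would not prove the lemma as literally stated, since a self-normalized martingale bound changes the constant $\credconst$ away from the exact Chi-squared quantile; the Bayesian reading in \textbf{(a)} is the intended one.
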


Lemma \ref{lem:credibility_region} is a consequence of the fact the posterior distribution of parameters $A,B$ (for a uniform prior) is Gaussian for models of the form \eqref{eq:lti}, cf. e.g. \cite{umenberger2018learning}.
%The ellipsoidal credibility region associated with a Gaussian posterior can then be transformed into a `spectral region' $\matrixmodelset(\model)$, as in \eqref{eq:spectral_region}, by linear algebra, cf. e.g. \cite[Lemma 3.1]{umenberger2019robust}.
Similar credibility regions have been attained using results from high-dimensional statistics in recent works such as \cite{dean2017sample}.

\paragraph{Control objective}
Our objective is to design a feedback control policy 
$u_t = \genpolicy( x_{1:t},u_{1:t-1})$
so as to minimize the cost function 
$\sum_{t=1}^\totaltime \ \cost(x_t,u_t)$, where $\cost(x_t,u_t) = x_t\trnsp\qcost x_t + u_t\trnsp\rcost u_t $ for user-specified positive semidefinite matrices $\qcost$ and $\rcost$.
When the parameters of the true system, $\lbr \atr,\btr\rbr$, are known this is the well-known LQR problem.
As discussed, we do not assume knowledge of the true parameters; as such, the controller must regulate and learn the system simultaneously.  
To this end, we partition the total `control time' $[1,\totaltime]$ into two intervals: $[1,\timexp]$ and $[\timexp+1,\totaltime]$ for $\timexp\in\naturals$.
At time $t=1$, given initial data $\data_0$, a policy $\gpoli$ is designed and applied to the system for $t\in[1,\timexp]$.
Then, at time $t=\timexp+1$, a new policy $\gpolii$ is designed, based on $\data_0$ and data $\data_{\timexp}$ collected under $\gpoli$.
Policy $\gpolii$ is then applied for $t\in[\timexp+1,\totaltime]$.
We can write this control task as:
%\begin{subequations}\label{eq:ideal_control_task}
%\begin{align}
%\min_{\gpoli,\gpolii} \quad\quad  & \ev {\sum}_{t=1}^\totaltime \cost(x_t,u_t) \\
%\st \  x_{t+1} &= \atr x_t + \btr u_t + w_t, \ w_t \sim \normal{0}{\diststd^2I_{n_x}}, \\
% u_t &= \gpoli(\cdot), \ t = 1,\dots,\timexp, \\
%  u_t &= \gpolii(\cdot), \ t = \timexp+1,\dots,\totaltime.
%\end{align}
%\end{subequations}
	\begin{align}\label{eq:ideal_control_task}
	\min_{\gpoli,\gpolii} \quad   \ev {\sum}_{t=1}^\totaltime \cost(x_t,u_t), \quad
	\st \  &\textup{ dynamics in } \eqref{eq:lti} \text{ with } A=\atr, \ B=\btr, \\
	& u_t = \gpoli(\cdot), \ t = 1,\dots,\timexp, \
	u_t = \gpolii(\cdot), \ t = \timexp+1,\dots,\totaltime. \nonumber
	\end{align}
One can think of the first interval, $[1,\timexp]$, as an `exploration' or `learning' period where the data collected is used to design an improved controller, $\gpolii$, applied during the second interval $[\timexp+1,\totaltime]$, which could be considered an `exploitation' period.
However, the task is to minimize the total cost 
%associated with both intervals, 
therefore, it is important to balance exploration and exploitation.
The decision to nominate a specific time, $\timexp$, at which the control policy will be `updated' 
%based on observed data 
requires some justification.
A more natural formulation might update the controller whenever new data becomes available.
We shall discuss this aspect of the formulation in more detail in \mysec\ref{sec:discussion}, where alternative formulations are considered.
For now, suffice to say that this formulation, i) simplifies the presentation of the technical developments to follow, and ii) still captures the importance of balancing `exploration' with `exploitation.'

Observe that the control task in \eqref{eq:ideal_control_task} depends on the true, but unknown, system parameters $\atr,\btr$, as we want to optimize for performance on the true system.
In place of the true system parameters, we will optimize for our `best guess' of the parameters, i.e., the nominal parameters $\anom,\bnom$ 
from least squares corresponding to the mode of the posterior distribution.
%corresponding to the least squares estimates given observed data (and the mode of the posterior distribution).
%These are the `most likely' parameters, in that they represent the mode of the posterior distribution given data.
To ensure reasonable behavior of the true system in closed-loop, we also require the controllers to stabilize the true system with high probability.
Let $\cloop(A,B,\genpolicy)$ denote the closed loop system formed by combining \eqref{eq:lti} with the policy $\genpolicy$.
The problem addressed in this paper as follows:
\begin{subequations}\label{eq:control_task}
	\begin{align}
	\min_{\gpoli,\gpolii} & \quad  \ev {\sum}_{t=1}^\totaltime \cost(x_t,u_t) \\
	\st \  x_{t+1} &= \anom_1 x_t + \bnom_1 u_t + w_t, \ \lbr \anom_1,\bnom_1\rbr = \model(\data_0), \ u_t = \gpoli(\cdot),\ t\in[1,\timexp] \\
	x_{t+1} &= \anom_2 x_t + \bnom_2 u_t + w_t, \ \lbr \anom_2,\bnom_2\rbr = \model(\data_0\cup\data_\timexp), \ u_t = \gpolii(\cdot),\ t\in [\timexp+1,\totaltime], \\
	 & \cloop(A,B,\gpoli) \textup{ is stable } \forall \lbr A,B\rbr \in \matrixmodelset(\model(\data_0)) \label{eq:stability_1} \\
	 & \cloop(A,B,\gpolii) \textup{ is stable } \forall \lbr A,B\rbr \in \matrixmodelset(\model(\data_0\cup\data_\timexp)), \ w_t \sim \normal{0}{\diststd^2I_{n_x}} \forall t.
	\end{align}
\end{subequations}
%where $w_t \sim \normal{0}{\diststd^2I_{n_x}}$ $\forall t$.

\section{Controller synthesis}\label{sec:control_synthesis}

In what follows, \textbf{bold} symbols denote the z-transform of time domain signals, e.g., the z-transform of $x$ is denoted $\xzt$.

\subsection{Preliminary results from System Level Synthesis}\label{sec:sls}
In this section we review some essential results from the System Level Synthesis (SLS) framework proposed by \cite{wang2019system}; for a comprehensive tutorial, cf. \cite{anderson2019system}.
Consider the closed-loop behavior of \eqref{eq:lti} under the stabilizing controller $\uzt = \kzt\xzt$; in particular, consider the transfer functions $\xrespzt$ and $\urespzt$ from disturbance $\wzt$ to state $\xzt$ and control $\uzt$, respectively. 
This can be expressed as
$
\left[\begin{array}{c}
\xzt \\ \uzt
\end{array}\right]
=
\left[\begin{array}{c}
\xrespzt \\ \urespzt
\end{array}\right]
\wzt.
$
%\begin{equation}\label{eq:responses}
%\left[\begin{array}{c}
%\xzt \\ \uzt
%\end{array}\right]
%=
%\left[\begin{array}{c}
%\xrespzt \\ \urespzt
%\end{array}\right]
%\wzt
%\end{equation}
Following in the spirit of the Youla parameterization, rather than designing the controller $\kzt$ to obtain the closed-loop responses  $\xrespzt = (zI-A-B\kzt)^{-1}$ and $\urespzt=\kzt\xrespzt$, in SLS one designs the closed-loop responses directly, and then recovers the controller as $\kzt = \urespzt\xrespzt^{-1}$.
The following theorem characterizes the space of all closed-loop responses achievable by a stabilizing controller.
\begin{theorem}[\cite{anderson2019system}, Theorem 4.1]\label{thm:sls_affine}
The affine subspace defined by 
\begin{equation}\label{eq:sls_affine}
\left[\begin{array}{cc}
zI - A & -B
\end{array}\right]
\left[\begin{array}{c}
\xrespzt \\ \urespzt
\end{array}\right]
= I, \quad \xrespzt,\urespzt\in\frac{1}{z}\rhinf,
\end{equation}
parametrizes all closed-loop responses achievable by a stabilizing controller.
Further, the response is achieved by the controller $\kzt = \urespzt\xrespzt^{-1}$.
\end{theorem}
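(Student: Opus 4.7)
The plan is to prove the theorem by establishing two inclusions: every stabilizing controller induces closed-loop responses lying in the affine subspace defined by \eqref{eq:sls_affine}, and, conversely, every pair $(\xrespzt,\urespzt)\in\tfrac{1}{z}\rhinf$ satisfying \eqref{eq:sls_affine} arises as the closed-loop responses of the stabilizing controller $\kzt = \urespzt\xrespzt^{-1}$.

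For the forward direction, I take any internally stabilizing $\kzt$ and compute the closed-loop maps from $\wzt$ to $(\xzt,\uzt)$ directly. The loop equation $\xzt = (zI-A)^{-1}(B\kzt\xzt + \wzt)$ gives $\xrespzt = (zI-A-B\kzt)^{-1}$ and $\urespzt = \kzt\xrespzt$. Left-multiplying by $\begin{bmatrix}zI-A & -B\end{bmatrix}$ yields $(zI-A-B\kzt)\xrespzt = I$, verifying the affine constraint. Internal stability of the closed loop forces both responses to lie in $\rhinf$; the factor of $(zI-A)^{-1}$ and the strict properness of any realizable $\kzt$-induced response in the unit-delay convention give the strict properness, placing them in $\tfrac{1}{z}\rhinf$.

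For the reverse direction, I fix any $(\xrespzt,\urespzt)\in\tfrac{1}{z}\rhinf$ satisfying \eqref{eq:sls_affine} and define $\kzt\defeq \urespzt\xrespzt^{-1}$. Invertibility of $\xrespzt$ as a rational transfer matrix follows from expanding the affine equation as $z\xrespzt = I + A\xrespzt + B\urespzt$, which shows $\xrespzt = z^{-1}I + O(z^{-2})$, so $\xrespzt$ is a proper rational matrix with nonsingular leading coefficient, hence invertible in $\rhinf$'s field of fractions, and $\kzt$ is proper. Substituting $B\urespzt = B\kzt\xrespzt$ into \eqref{eq:sls_affine} gives $(zI-A-B\kzt)\xrespzt = I$, so $\xrespzt = (zI-A-B\kzt)^{-1}$ and $\urespzt = \kzt\xrespzt$ are exactly the closed-loop responses produced by $\kzt$.

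It remains to argue internal stability, which is the main delicate point. It is not enough that the single map $\wzt\mapsto(\xzt,\uzt)$ is stable; one must check stability of all four transfer matrices in the standard feedback diagram with disturbances injected at both the state update and the control channel. The key observation is that, using the affine constraint together with $\urespzt = \kzt\xrespzt$, each of these four closed-loop maps can be written as a polynomial combination of $\xrespzt$, $\urespzt$, and the plant matrices $(A,B)$; since $\xrespzt,\urespzt\in\tfrac{1}{z}\rhinf$, every such combination lies in $\rhinf$, which yields internal stability of $\kzt$. I expect this final book-keeping step, and the verification that the realization of $\kzt = \urespzt\xrespzt^{-1}$ has no hidden unstable modes, to be the main obstacle; it is handled cleanly by expressing a doubly-coprime factorization of $\kzt$ in terms of $(\xrespzt,\urespzt)$ and the affine-constraint residuals, after which all four maps are manifestly stable.
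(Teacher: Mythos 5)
The paper does not prove this theorem: it is imported verbatim as Theorem 4.1 of \cite{anderson2019system}, so there is no in-paper argument to compare against. Measured against the proof in that reference, your outline is the standard one and is essentially correct: the forward inclusion via $\xrespzt=(zI-A-B\kzt)^{-1}$, $\urespzt=\kzt\xrespzt$; invertibility of $\xrespzt$ from the leading term $z^{-1}I$ forced by the affine constraint (this step is right, and it is the reason the constraint $\xrespzt\in\frac{1}{z}\rhinf$ rather than merely $\rhinf$ is needed); and the observation that stability of the single map $\wzt\mapsto(\xzt,\uzt)$ does not by itself give internal stability.

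The one place where your proposal asserts rather than proves is exactly the place where the theorem has content: internal stability of $\kzt=\urespzt\xrespzt^{-1}$. You say the four closed-loop maps ``can be written as a polynomial combination of $\xrespzt$, $\urespzt$, and the plant matrices'' and that this is ``handled cleanly by expressing a doubly-coprime factorization,'' but neither is exhibited. The cited proof does this concretely: it fixes a specific realization of the controller (the SLS implementation $\hat{\wzt}=\xzt-\hat{\xzt}$, $\uzt=z\urespzt\hat{\wzt}$, $\hat{\xzt}=(z\xrespzt-I)\hat{\wzt}$, with perturbations injected at both the plant state and the controller's internal state) and computes the resulting $2\times 2$ block of closed-loop transfer matrices, which come out as explicit affine expressions in $\xrespzt,\urespzt$ (e.g.\ $\xrespzt$, $\urespzt$, $\xrespzt(zI-A)-I$, $\urespzt(zI-A)$, modulo the exact bookkeeping), all manifestly in $\rhinf$. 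Without naming the realization, the claim that $\urespzt\xrespzt^{-1}$ has no hidden unstable pole-zero cancellation is precisely what is at issue, since $\xrespzt^{-1}$ is in general unstable and improper. So the route is the same as the source's, but to count as a proof you would need to write down either that realization and its closed-loop maps, or the coprime factors you allude to, explicitly.
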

The following theorem considers a `perturbed' version of the constraints in \eqref{eq:sls_affine}, that is useful for synthesizing robust controllers, e.g., when the system parameters $A$,$B$ are uncertain.
\begin{theorem}[\cite{anderson2019system}, Theorem 4.3]\label{thm:sls_robust}
Suppose that $\xrespzt,\urespzt,\Deltazt$ satisfy 
	\begin{equation}\label{eq:sls_robust}
	\left[\begin{array}{cc}
	zI - A & -B
	\end{array}\right]
	\left[\begin{array}{c}
	\xrespzt \\ \urespzt
	\end{array}\right]
	= I + \Deltazt, \quad \xrespzt,\urespzt\in\frac{1}{z}\rhinf.
	\end{equation}
	Then the controller $\kzt = \urespzt\xrespzt^{-1}$ stabilizes system \eqref{eq:lti} with parameters $A,B$ if and only if $(I+\Deltazt)^{-1}\in\rhinf$.
\end{theorem}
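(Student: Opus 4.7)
The plan is to compute the actual closed-loop transfer matrices that result from applying the proposed controller $\kzt = \urespzt\xrespzt^{-1}$ to the plant $(A,B)$, and express them entirely in terms of the perturbed responses $\xrespzt,\urespzt$ and the residual $\Deltazt$. Once these expressions are in hand, stability of the closed-loop system reduces to a statement about where $(I+\Deltazt)^{-1}$ sits, since $\xrespzt$ and $\urespzt$ are already assumed strictly proper and stable.

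The key algebraic step is to substitute $\urespzt = \kzt\xrespzt$ into the perturbed Bezout-type identity \eqref{eq:sls_robust} to obtain
\begin{equation*}
(zI - A - B\kzt)\xrespzt = I + \Deltazt.
\end{equation*}
This identity does two things at once. First, it shows that the true disturbance-to-state map $\tilde{\xresp} \defeq (zI - A - B\kzt)^{-1}$ (which is what actually governs internal stability in the realized loop) factors as
\begin{equation*}
\tilde{\xresp} = \xrespzt(I+\Deltazt)^{-1}, \qquad \tilde{\uresp} = \kzt\tilde{\xresp} = \urespzt(I+\Deltazt)^{-1}.
\end{equation*}
Second, it immediately shows that $(I+\Deltazt)^{-1}$, when it exists in $\rhinf$, propagates through strictly-proper stable factors $\xrespzt,\urespzt\in\tfrac{1}{z}\rhinf$ to yield closed-loop maps in $\tfrac{1}{z}\rhinf$, which handles the ``if'' direction of the theorem.

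For the ``only if'' direction I would argue as follows. Suppose $\kzt$ stabilizes $(A,B)$, so $\tilde{\xresp},\tilde{\uresp}\in\tfrac{1}{z}\rhinf$. Right-multiplying the perturbed identity \eqref{eq:sls_robust} by $(I+\Deltazt)^{-1}$ shows that $\tilde{\xresp},\tilde{\uresp}$ satisfy the \emph{unperturbed} affine subspace of Theorem~\ref{thm:sls_affine}, so they are the closed-loop responses generated by $\kzt$, and by the same theorem these responses uniquely pin down $\kzt$ from their ratio. Hence the factorization of the previous paragraph is the one given by Theorem~\ref{thm:sls_affine}, and in particular the transfer matrix $(I+\Deltazt)^{-1}$ must coincide with a proper quotient of two elements of $\rhinf$ that is itself well-defined; feeding this back through the identity $(I+\Deltazt)(zI-A-B\kzt)^{-1} = \xrespzt$ with $\xrespzt\in\tfrac{1}{z}\rhinf$ and $(zI-A-B\kzt)^{-1}\in\rhinf$ then yields $(I+\Deltazt)^{-1}\in\rhinf$.

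The main obstacle is precisely this ``only if'' direction: $\xrespzt^{-1}$ is in general improper (since $\xrespzt\in\tfrac{1}{z}\rhinf$ vanishes at infinity), so one cannot naively invert the factorization $\tilde{\xresp}=\xrespzt(I+\Deltazt)^{-1}$ to isolate $(I+\Deltazt)^{-1}$. The careful step is to avoid forming $\xrespzt^{-1}$ at all and instead use the affine SLS parameterization (Theorem~\ref{thm:sls_affine}) to identify the realized closed-loop maps and read off stability of $(I+\Deltazt)^{-1}$ from an expression that only involves stable, proper factors. Everything else is routine transfer-matrix algebra.
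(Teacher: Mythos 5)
The paper states this theorem without proof (it is quoted from \cite{anderson2019system}), so I am comparing your argument against the proof in that reference. Your ``if'' direction is essentially the standard one and is sound: right-multiplying \eqref{eq:sls_robust} by $(I+\Deltazt)^{-1}$ shows that $\hat{\Phi}_x\defeq\xrespzt(I+\Deltazt)^{-1}$ and $\hat{\Phi}_u\defeq\urespzt(I+\Deltazt)^{-1}$ satisfy the exact constraint \eqref{eq:sls_affine}, lie in $\frac{1}{z}\rhinf$ whenever $(I+\Deltazt)^{-1}\in\rhinf$, and satisfy $\hat{\Phi}_u\hat{\Phi}_x^{-1}=\urespzt\xrespzt^{-1}=\kzt$, so Theorem~\ref{thm:sls_affine} certifies that $\kzt$ is stabilizing and achieves these responses.

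The ``only if'' direction has a genuine gap; you correctly locate the obstacle but your workaround does not close it. From $(zI-A-B\kzt)\xrespzt=I+\Deltazt$ you get $(zI-A-B\kzt)^{-1}=\xrespzt(I+\Deltazt)^{-1}$, and your final step infers $(I+\Deltazt)^{-1}\in\rhinf$ from the stability of $\xrespzt$ and of the realized closed-loop maps. That inference fails: an unstable pole of $(I+\Deltazt)^{-1}$ can be cancelled by a transmission zero of $\xrespzt$ (and of $\urespzt$), leaving every \emph{external} closed-loop map stable, so no rearrangement of products of the stable quantities $\xrespzt,\urespzt,\tilde{\Phi}_x,\tilde{\Phi}_u$ can isolate $(I+\Deltazt)^{-1}$. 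The missing ingredient is the \emph{internal realization} of $\kzt=\urespzt\xrespzt^{-1}$ used in SLS: the controller keeps an internal disturbance estimate $\hat{\wzt}$ defined by $\hat{\wzt}=\xzt-\hat{\xzt}$, $\hat{\xzt}=(z\xrespzt-I)\hat{\wzt}$, $\uzt=z\urespzt\hat{\wzt}$, and interconnecting this with the true plant $(A,B)$ gives
\begin{equation*}
\hat{\wzt}=\frac{1}{z}(I+\Deltazt)^{-1}\wzt,\qquad \xzt=\xrespzt(I+\Deltazt)^{-1}\wzt,\qquad \uzt=\urespzt(I+\Deltazt)^{-1}\wzt,
\end{equation*}
so that $(I+\Deltazt)^{-1}$ appears as a closed-loop map in its own right, with no $\xrespzt$ prefactor behind which unstable modes could hide. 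Internal stability of the loop therefore requires $(I+\Deltazt)^{-1}\in\rhinf$, which is exactly how the cited proof obtains the ``only if'' direction; without invoking this realization (or some equivalent coprimeness/no-cancellation argument), the equivalence cannot be recovered from the external transfer functions alone.
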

While the preceding theorems define affine subspaces (i.e. \eqref{eq:sls_affine} and \eqref{eq:sls_robust}) that are convenient to optimize over, the decision variables $\xrespzt$ and $\urespzt$ are infinite dimensional transfer matrices.
As is common in the SLS framework, we will work with finite impulse response (FIR) approximations:
\begin{equation}\label{eq:fir_def}
\xfirzt(z) = {\sum}_{k=0}^\firl \xresp^kz^{-k}, \quad \ufirzt(z) = {\sum}_{k=0}^\firl \uresp^kz^{-k}.
\end{equation}
Henceforth, we will restrict our attention to policies of the form $\genpolicy(z) = \ufirzt(z)\xfirzt(z)^{-1}$.

\subsection{Robust control formulation}\label{sec:robust_synthesis}
In this section, we present a convex formulation of the following problem:
(approximately) optimize the infinite-horizon quadratic cost, for a given nominal model $\lbr\anom,\bnom\rbr$, while robustly stabilizing all models $\lbr A,B\rbr$ in the model set $\mset(\model)$.
This result extends existing SLS formulations, by preserving the structure in the representation of uncertainty captured by $\uncert$.

Following straightforward calculations, cf. e.g. \cite[\mysec2.2.2]{anderson2019system}, the infinite horizon cost function can be written as
\begin{equation}\label{eq:h2_cost}
\lim_{\tau\rightarrow\infty}\frac{1}{\tau}\sum_{t=1}^\tau \ev[\cost(x_t,u_t)] 
= \ev \norm{\left[\begin{array}{cc}
	Q^{\frac{1}{2}} & 0 \\ 0 & R^{\frac{1}{2}}
	\end{array}\right]
	\left[\begin{array}{c}
	\xzt \\ \uzt
	\end{array}\right]}{F}^2
%= \diststd^2 \norm{\left[\begin{array}{cc}
%	Q^{\frac{1}{2}} & 0 \\ 0 & R^{\frac{1}{2}}
%	\end{array}\right]
%	\left[\begin{array}{c}
%	\xfirzt \\ \ufirzt
%	\end{array}\right]}{F}^2
= \diststd^2 \norm{\left[\begin{array}{cc}
	Q^{\frac{1}{2}} & 0 \\ 0 & R^{\frac{1}{2}}
	\end{array}\right]
	\left[\begin{array}{c}
	\xfirzt \\ \ufirzt
	\end{array}\right]}{\htwo}^2
\end{equation}
subject to the affine constraints in \eqref{eq:sls_affine} with the nominal parameters $\anom,\bnom$, i.e., 
%$\left[\begin{array}{cc}
%zI - \anom & -\bnom
%\end{array}\right]
%\left[\begin{array}{c}
%\xrespzt \\ \urespzt
%\end{array}\right]
%= I$.
\begin{equation}\label{eq:affine_nominal}
\left[\begin{array}{cc}
zI - \anom & -\bnom
\end{array}\right]
\left[\begin{array}{c}
\xrespzt \\ \urespzt
\end{array}\right]
= I.
\end{equation}
By making use of the FIR approximation in \eqref{eq:fir_def}, the rightmost side of \eqref{eq:h2_cost} can be written as:
\begin{equation}\label{eq:h2_objective}
\htwocost = \diststd^2 \norm{\left[\begin{array}{cc}
	Q^{\frac{1}{2}}\kron I_F & 0 \\ 0 & R^{\frac{1}{2}}\kron I_F
	\end{array}\right]
	\left[\begin{array}{c}
	\xfirstk \\ \ufirstk
	\end{array}\right]}{F}^2,
\end{equation}
where $\xfirstk = \left[ \xresp\trnsp(0) \dots \xresp\trnsp(\firl)\right]\trnsp$ and 
$\ufirstk = \left[ \uresp\trnsp(0) \dots \uresp\trnsp(\firl)\right]\trnsp$, denote the FIR parameters from \eqref{eq:fir_def}, stacked vertically.
Note that \eqref{eq:h2_objective} is a convex quadratic function of $\xfirstk$ and $\ufirstk$.

To ensure robustness of the policy on the true, unknown system (as in, e.g., \eqref{eq:stability_1}), we make use of Theorem \ref{thm:sls_robust}.
Specifically, as $\xfirzt$ and $\ufirzt$ are constrained to satisfy \eqref{eq:affine_nominal}, we can express $\Deltazt$ in \eqref{eq:sls_robust} as 
$\Deltazt = (\anom-\atr)\xfirzt + (\bnom-\btr)\ufirzt$,
by substituting \eqref{eq:affine_nominal} into \eqref{eq:sls_robust}, with $A=\atr$ and $B=\btr$.
By Theorem \ref{thm:sls_robust}, the controller $\ufirzt\xfirzt^{-1}$ will stabilize the true system, if and only if $(I+\Deltazt)^{-1}$ is stable.
Of course, $\Deltazt$ is defined in terms of $\atr,\btr$, which are unknown; however, by Lemma \ref{lem:credibility_region}, they are known to lie in $\mset(\model)$ with high-probability.
A sufficient condition for stability of $(I+\Deltazt)^{-1}$ is given by the small gain theorem: 
$\norm{\Deltazt}{\hinf}\leq1$ implies stability of $(I+\Deltazt)^{-1}$.
The 
%$\norm{\cdot}{\hinf}$ 
$\hinf$-norm
of a transfer matrix can be computed/constrained as follows:
\begin{lemma}[\cite{dumitrescu2007positive}]\label{lem:hinf}
Let $\dumtfzt(z)=\sum_{i=0}^\firl\dumtf(i)z^{-i}$, with $\dumtf\in\real^{p\times m}$ and $\bar{\dumtf}=$ $\left[ \dumtf(0)\trnsp \dots \dumtf(\firl)\trnsp \right]\trnsp$. 
Then $\norm{\dumtfzt}{\hinf}\leq \gamma$ iff there exists $\cert\in\sym{p(\firl+1)}$ satisfying 
%\begin{equation}\label{eq:hinf_bound_cond}
%\cert = \left[\begin{array}{cccc}
%\cert_{00} & \cert_{01} & \cdots & \cert_{0\firl} \\
%\star & \cert_{11} & \cdots & \cert_{1\firl} \\
%\star & \star & \ddots & \vdots \\
%\star & \star & \star & \cert_{\firl\firl} 
%\end{array} \right],
%\sum_{i=0}^\firl \cert_{ii} = \gamma I, 
%\sum_{i=0}^{\firl-k}\cert_{i(i+k)}=0, k=1,\dots,\firl,
%\end{equation}
%and
%\begin{equation}\label{eq:hinf_lmi}
%\left[ \begin{array}{cc}
%\cert & \bar{\dumtf} \\ \bar{\dumtf}\trnsp & I
%\end{array} \right] \mge 0.
%\end{equation}
\begin{subequations}
\begin{align}
\cert = &\left[\begin{array}{cccc}
\cert_{00} & \cert_{01} & \cdots & \cert_{0\firl} \\
\star & \cert_{11} & \cdots & \cert_{1\firl} \\
\star & \star & \ddots & \vdots \\
\star & \star & \star & \cert_{\firl\firl} 
\end{array} \right],
\sum_{i=0}^\firl \cert_{ii} = \gamma I, 
\sum_{i=0}^{\firl-k}\cert_{i(i+k)}=0, k=1,\dots,\firl, \label{eq:hinf_bound_cond} \\
&\left[ \begin{array}{cc}
\cert & \bar{\dumtf} \\ \bar{\dumtf}\trnsp & I
\end{array} \right] \mge 0. \label{eq:hinf_lmi}
\end{align}
\end{subequations}
%\begin{subequations}
%	\begin{align}
%	\cert = &\left[\begin{array}{ccc}
%	\cert_{00} & \cdots & \cert_{0\firl} \\
%	\star & \ddots & \vdots \\
%	\star &  \star & \cert_{\firl\firl} 
%	\end{array} \right],
%	\sum_{i=0}^\firl \cert_{ii} = \gamma I, 
%	\sum_{i=0}^{\firl-k}\cert_{i(i+k)}=0, k=1,\dots,\firl, \label{eq:hinf_bound_cond} \\
%	&\left[ \begin{array}{cc}
%	\cert & \bar{\dumtf} \\ \bar{\dumtf}\trnsp & I
%	\end{array} \right] \mge 0. \label{eq:hinf_lmi}
%	\end{align}
%\end{subequations}
\end{lemma}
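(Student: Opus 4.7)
The plan is to reduce the $\hinf$-norm inequality to a positivity condition on a matrix-valued trigonometric polynomial on the unit circle, and to then apply the Gram-matrix (sum-of-squares) parameterization of positive trigonometric matrix polynomials used systematically in \cite{dumitrescu2007positive}. By definition, $\norm{\dumtfzt}{\hinf}^2 \leq \gamma$ (up to the cosmetic normalization in the statement) is equivalent to $\gamma I - \dumtfzt(z)\dumtfzt(z)^* \mge 0$ for every $z$ on the unit circle; the task is to certify this frequency-domain matrix inequality by a finite-dimensional LMI in the coefficients of $\dumtfzt$.

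For the sufficiency direction, I would introduce the basis matrix $\phi(z) \defeq [I_p,\ z^{-1}I_p,\ \ldots,\ z^{-\firl}I_p]$, which has two properties that drive the entire argument: first, $\phi(z)\bar{\dumtf} = \dumtfzt(z)$ since the blocks of $\bar{\dumtf}$ are precisely $\dumtf(0),\ldots,\dumtf(\firl)$; second, $\phi(z)\cert\phi(z)^* = \sum_{i,j=0}^\firl z^{j-i}\cert_{ij}$ for $|z|=1$. Grouping the latter by powers of $z$, the constraints $\sum_i \cert_{ii} = \gamma I$ and $\sum_i \cert_{i,i+k} = 0$ in \eqref{eq:hinf_bound_cond} (together with symmetry of $\cert$, which handles the negative powers by transposition) kill every nonconstant Fourier coefficient, leaving $\phi(z)\cert\phi(z)^* \equiv \gamma I$. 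Applying the Schur complement to \eqref{eq:hinf_lmi} yields $\cert \mge \bar{\dumtf}\bar{\dumtf}\trnsp$, and sandwiching this inequality by $\phi(z)$ and $\phi(z)^*$ produces $\dumtfzt(z)\dumtfzt(z)^* \mle \gamma I$ for all $|z|=1$, which is precisely the desired norm bound.

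For the necessity direction, I would invoke the matrix Fej\'er--Riesz / spectral factorization theorem, which asserts that any matrix trigonometric polynomial that is PSD on the unit circle admits a Gram representation $\phi(z)\cert\phi(z)^*$ with $\cert \mge 0$ satisfying precisely the trace-sum constraints \eqref{eq:hinf_bound_cond}; the off-diagonal coupling to $\bar{\dumtf}$ in \eqref{eq:hinf_lmi} then encodes the residual frequency-domain inequality via another Schur complement. This existence step is the main obstacle, as it rests on the nontrivial spectral-factorization theory for positive matrix polynomials rather than on any algebraic manipulation. Because the lemma is quoted verbatim from \cite{dumitrescu2007positive}, my plan would be to present the sufficiency argument in detail as above and to cite the existence of the Gram certificate for the converse.
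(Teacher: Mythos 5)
The paper gives no proof of this lemma---it is quoted directly from \cite{dumitrescu2007positive}---so there is nothing internal to compare against; your reconstruction is correct and follows the same route as the cited source: sufficiency via the Gram-matrix identity $\phi(z)\cert\phi(z)^*=\gamma I$ enforced by \eqref{eq:hinf_bound_cond} together with the Schur complement $\cert\mge\bar\dumtf\bar\dumtf\trnsp$ of \eqref{eq:hinf_lmi}, and necessity via the trace (Gram) parameterization of positive trigonometric matrix polynomials, which indeed rests on matrix spectral factorization and is the only nontrivial ingredient. Your normalization remark is also accurate: as written the LMI certifies $\norm{\dumtfzt}{\hinf}\leq\sqrt{\gamma}$ rather than $\leq\gamma$ (the source states the trace condition with $\gamma^2$), which is immaterial here since the paper only invokes the lemma with $\gamma=1$.
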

We can now present the main contribution of this section.

%\begin{theorem}
%Given a model $\model = \lbr \anom,\bnom,\uncert\rbr$, a convex upper bound for the problem of minimizing the infinite-horizon cost $\lim_{\tau\rightarrow\infty}\frac{1}{\tau}\sum_{t=1}^\tau \ev[\cost(x_t,u_t)]$ for a system \eqref{eq:lti} with parameters $A=\anom$ and $B=\bnom$, subject to the constraint that the controller $\genpolicy$ stabilizes all models in $\mset(\model)$, is given by
%%\begin{subequations}
%\begin{align}\label{eq:robust_synth}
%\robustcost(\model) \defeq \min_{\xfirzt,\ufirzt,\cert} \ \htwocost(\xfirzt,\ufirzt), \quad \st \ \eqref{eq:affine_nominal}, \ \eqref{eq:hinf_bound_cond}, 
%\left[\begin{array}{ccc}
%\cert & 0 & \xuresp \\ 
%0 & (1-\multiplier) & 0 \\
%\xuresp\trnsp & 0 & \multiplier\uncert
% \end{array}\right] \mge 0,
%\end{align}
%where $\xuresp=\left[\begin{array}{cccc}
%\xresp(0) & \dots & \xresp(\firl) \\
%\uresp(0) & \dots & \uresp(\firl) \\
%\end{array}\right]\trnsp$,
%and $\genpolicy$ can be realized as $\genpolicy = \ufirzt\xfirzt^{-1}$.
%%\end{subequations}
%\end{theorem}

\begin{theorem}\label{thm:robust_synth}
	Given a model $\model = \lbr \anom,\bnom,\uncert\rbr$, a convex upper bound for  $\min_\genpolicy\lim_{\tau\rightarrow\infty}\frac{1}{\tau}\sum_{t=1}^\tau \ev[\cost(x_t,u_t)]$ for a system \eqref{eq:lti} with parameters $A=\anom$ and $B=\bnom$, subject to the constraint that the controller $\genpolicy$ stabilizes all models in $\mset(\model)$, is given by
	$\robustcostopt(\model) = \min_\genpolicy \robustcost(\genpolicy,\model)$, 		
	where

	\begin{align}\label{eq:robust_synth}
	\robustcost(\genpolicy,\model) \defeq \Bigg\lbrace \htwocost(\xfirzt,\ufirzt) \mid 
%	\ \genpolicy = \ufirzt\xfirzt^{-1}, 
	\eqref{eq:affine_nominal}, \ \exists \cert, \multiplier\in\real\ \st \ \eqref{eq:hinf_bound_cond},  
	\left[\begin{array}{ccc}
	\cert & 0 & \xuresp \\ 
	0 & (1-\multiplier)I & 0 \\
	\xuresp\trnsp & 0 & \multiplier\uncert
	\end{array}\right] \mge 0 	\Bigg\rbrace
	\end{align}
	where $\xuresp=\left[\begin{array}{cccc}
	\xresp(0) & \dots & \xresp(\firl) \\
	\uresp(0) & \dots & \uresp(\firl) \\
	\end{array}\right]\trnsp$,
	and $\genpolicy$ can be realized as $\genpolicy = \ufirzt\xfirzt^{-1}$.
	%\end{subequations}
\end{theorem}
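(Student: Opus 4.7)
The plan is to combine four ingredients: (i) the SLS parametrization of nominal closed-loop responses from Theorem \ref{thm:sls_affine}, (ii) the small-gain robust stability condition derived from Theorem \ref{thm:sls_robust}, (iii) the LMI characterization of the $\hinf$-norm from Lemma \ref{lem:hinf}, and (iv) a matrix S-procedure that collapses a $\modelx$-parametrized LMI into a single, $\modelx$-free LMI through the scalar multiplier $\multiplier$. I expect step (iv) to be the crux.

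First, applying Theorem \ref{thm:sls_affine} with $A = \anom$, $B = \bnom$, together with the FIR ansatz \eqref{eq:fir_def}, yields the affine constraint \eqref{eq:affine_nominal} on $(\xfirzt, \ufirzt)$ and an expected infinite-horizon cost on the nominal system equal to $\htwocost$ from \eqref{eq:h2_objective}. This handles the objective.

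For the robust stability constraint, fix $(A,B) \in \mset(\model)$ and set $\modelx = [\anom - A,\ \bnom - B]\trnsp$, so $\modelx\trnsp \uncert \modelx \mle I$. Subtracting \eqref{eq:affine_nominal} from \eqref{eq:sls_robust} (with these true $A,B$) yields $\Deltazt = (\anom - A)\xfirzt + (\bnom - B)\ufirzt$, whose FIR coefficients satisfy $\Delta(k) = \modelx\trnsp [\xresp(k)\trnsp, \uresp(k)\trnsp]\trnsp$. Theorem \ref{thm:sls_robust} together with the small-gain theorem shows $\norm{\Deltazt}{\hinf} \leq 1$ suffices for stability of the true closed-loop. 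Applying Lemma \ref{lem:hinf} with $\gamma = 1$ to $\Deltazt\trnsp$ (which shares the $\hinf$-norm of $\Deltazt$ and whose stacked FIR coefficient matrix equals exactly $\xuresp\modelx$), this stability condition becomes the existence of $\cert$ satisfying \eqref{eq:hinf_bound_cond} and the LMI $\bigl[\begin{smallmatrix}\cert & \xuresp\modelx \\ \modelx\trnsp\xuresp\trnsp & I\end{smallmatrix}\bigr] \mge 0$, required to hold for every admissible $\modelx$.

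The decisive step is to show the theorem's LMI in $(\cert, \multiplier)$ is sufficient for this $\modelx$-parametrized LMI to hold uniformly over $\lbr \modelx : \modelx\trnsp \uncert \modelx \mle I \rbr$. Starting from the theorem's LMI, the zero off-diagonal blocks permit a permutation into block-diagonal form with factors $\bigl[\begin{smallmatrix}\cert & \xuresp \\ \xuresp\trnsp & \multiplier\uncert\end{smallmatrix}\bigr] \mge 0$ and $(1-\multiplier)I \mge 0$. A Schur complement on $\cert$ (or a perturbation argument if $\cert$ is merely positive semidefinite) yields $\multiplier \uncert \mge \xuresp\trnsp \cert^{-1} \xuresp$ and $\multiplier \leq 1$. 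For any admissible $\modelx$, congruence then gives
\begin{equation*}
\modelx\trnsp \xuresp\trnsp \cert^{-1} \xuresp \modelx \mle \multiplier\, \modelx\trnsp \uncert \modelx \mle \multiplier I \mle I,
\end{equation*}
which is exactly the Schur complement of the $\modelx$-parametrized LMI. Combined with feasibility of \eqref{eq:hinf_bound_cond}, Lemma \ref{lem:hinf} certifies $\norm{\Deltazt}{\hinf} \leq 1$ for every $(A,B) \in \mset(\model)$, establishing robust stability. Convexity of the overall program in $(\xfirstk, \ufirstk, \cert, \multiplier)$ is immediate, since the objective is convex quadratic and all constraints are either affine or LMIs; hence $\robustcost(\genpolicy,\model)$ is a convex upper bound on the desired nominal cost under robust stability.
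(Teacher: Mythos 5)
Your proof is correct and follows the same overall architecture as the paper's: nominal $\htwo$ cost under the SLS affine constraint \eqref{eq:affine_nominal}, small-gain sufficiency via Theorem \ref{thm:sls_robust}, Lemma \ref{lem:hinf} applied to $\Deltazt\trnsp$ with stacked FIR coefficients $\xuresp\modelx$, and finally an S-procedure in the multiplier $\multiplier$ to eliminate $\modelx$. The one genuine difference is that last step: the paper invokes the matrix S-lemma of Luo, Sturm and Zhang (its Lemma \ref{thm:robustlmi}), which is lossless (an ``iff''), so the single LMI in \eqref{eq:robust_synth} is exactly equivalent to requiring \eqref{eq:hinf_lmi} for every $\modelx$ with $\modelx\trnsp\uncert\modelx\mle I$; you instead verify the sufficiency direction by hand, via permutation to block-diagonal form, a Schur complement on $\cert$, and congruence with $\modelx$. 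Your elementary route suffices for the theorem as stated (only an upper bound is claimed) and is more self-contained; what it gives up is the guarantee that this step introduces no conservatism beyond the small-gain relaxation itself. One detail to tighten: the inequality $\multiplier\,\modelx\trnsp\uncert\modelx \mle \multiplier I$ requires $\multiplier\ge 0$, which the theorem's $\multiplier\in\real$ does not impose explicitly; it does follow from your block factor $\multiplier\uncert\mge 0$ whenever $\uncert\mg 0$ (true for any informative data set), but this should be stated.
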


\begin{proof}
The convex program $\min_{\xfirzt,\ufirzt,\cert} \ \htwocost(\xfirzt,\ufirzt)$ s.t. \eqref{eq:affine_nominal} optimizes the infinite-horizon cost.
All that remains is to ensure robust stability: a sufficient condition is $\norm{\Deltazt}{\hinf}\leq 1$.
This sufficient, but not necessary, condition is the source of the conservatism, i.e., the reason we only have an upper bound.
$\norm{\Deltazt}{\hinf}\leq 1$ can be enforced by combining Lemma \ref{lem:hinf} with the following lemma:
\begin{lemma}[\cite{luo2004multivariate}]\label{thm:robustlmi}
	The data matrices $(\lmia,\lmib,\lmic,\lmid,\lmif,\lmig,\lmih)$ satisfy,
	for all $X$ with $I-X\trnsp\lmid X\mge 0$,
	the robust fractional quadratic matrix inequality
	\begin{align}\label{eq:robust_LMI}
	\left[\begin{array}{cc}
	\lmih & \lmif+\lmig X \\
	(\lmif+\lmig X)\trnsp & \lmic+X\trnsp \lmib + \lmib \trnsp X +X\trnsp \lmia X \end{array}\right] \mge 0,
	\textup{ iff }
	\left[\begin{array}{ccc}
	\lmih & \lmif & \lmig\\
	\lmif\trnsp & \lmic - \multiplier I & \lmib\trnsp \\\lmig\trnsp & \lmib & \lmia + \multiplier\lmid  \end{array}\right]\mge 0,
	\end{align}
	for some $\multiplier \geq 0$.
\end{lemma}
Note that $\Deltazt\trnsp = [\xfirzt\trnsp, \ \ufirzt\trnsp ][ \anom - \atr, \ \bnom - \btr ]\trnsp
= [\xfirzt\trnsp, \ \ufirzt\trnsp ]\modelx$ where $\modelx$ is defined in \eqref{eq:spectral_region}.
Furthermore, $\norm{\Deltazt}{\hinf}\leq1 \iff \norm{\Deltazt\trnsp}{\hinf}\leq1$.
Let $\dumtfzt(z)$ (from Lemma \ref{lem:hinf}) denote $\Deltazt\trnsp$,
then the \eqref{eq:hinf_lmi} can be put in the form of the MI on the left in \eqref{eq:robust_LMI} by choosing
$\lmih=\cert$,
$\lmig=\xuresp$,
$\lmic=I$,
and $\lmia,\lmib,\lmif$ all zero.
Further, by choosing $\lmid=\uncert$ in Lemma \ref{thm:robustlmi}, the condition $I\mge X\trnsp\lmid\modelx$ is equivalent to $\lbr \atr,\btr\rbr\in\mset(\model)$, cf. \eqref{eq:spectral_region}.
The final constraint (LMI) in \eqref{eq:robust_synth} is then equivalent to the second LMI in \eqref{eq:robust_LMI}, which implies that $\norm{\Deltazt\trnsp}{\hinf}\leq 1$ for the true model parameters w.p. $1-\delta$.
\end{proof}
Observe that this formulation preserves the structure in the uncertainty representation, as encoded in the matrix $\uncert$.
This is in contrast to other SLS methods, e.g., \cite{dean2017sample,dean2018safely}, that reduce model uncertainty to a single (or at most two) scalar quantities, e.g., 
%$\norm{\anom-\atr}{2}\leq \epsilon_A$.
$\Vert\anom-\atr\Vert_2\leq \epsilon_A$.

\subsection{Robust dual control formulation}\label{sec:dual_synthesis}
In this section we return to the dual control problem outlined in \eqref{eq:control_task}, namely: minimize cost over $[1,\totaltime]$, via an initial policy $\gpoli$, designed using data $\data_0$, and applied for $t\in[1,\timexp]$, followed by a second policy $\gpolii$, designed using additional data $\data_\timexp$, and applied for $t\in[\timexp+1,\totaltime]$.
The key idea is dual control: $\gpoli$ affects not only the cost, but also the data $\data_\timexp$ available for the design of $\gpolii$.

\paragraph{Infinite-horizon approximation}
In what follows, we will approximate the cost $\sum_{t=1}^\timeh \ev c(x_t,u_t)$ by the infinite-horizon (i.e. stationary value) 
$\timeh\times\lim_{\tau\rightarrow\infty}\frac{1}{\tau}\sum_{t=1}^\tau\ev c(x_t,u_t)$.
Such an approximation can be expected to be valid when the horizon $\timeh$ is sufficiently long, so as to allow the system to reach the stationary distribution.
Alternatives to this approximation are discussed in \mysec\ref{sec:discussion}.
With the infinite horizon approximation, we can express \eqref{eq:control_task} as 
\begin{equation}\label{eq:compact_dual}
\min_{\gpoli} \ \timexp\times\robustcost(\gpoli,\model(\data_0)) + (\totaltime-\timexp)\times \robustcostopt(\model(\data_0\cup\data_\timexp)).
\end{equation}
Note the dependence of the cost during $[\timexp,\totaltime]$ on $\data_{\timexp}$. 
Note also that $\gpolii$ is defined implicitly by $\robustcostopt$, cf. \eqref{eq:robust_synth}.
Problem \eqref{eq:compact_dual} cannot be solved exactly, as it depends on $\data_{\timexp}$ which is not available at time $t=1$.
As such, we must predict the influence that $\gpoli$ will have on `future' data $\data_{\timexp}$.

\paragraph{Propagating uncertainty}
To approximately solve \eqref{eq:compact_dual}, we require an approximation of $\model(\data_0\cup\data_\timexp)$,
i.e., $\amodel(\gpoli) \approx \ev[ \model(\data_0\cup\data_{\timexp}) | \data_0,\gpoli]$.
Let $\model(\data_0)=\lbr \anom_1,\bnom_1,\uncert_1\rbr$.
We then define the approximate model as $\amodel(\gpoli)\eqdef \lbr \anom_1,\bnom_1,\auncert\rbr$.
First, note that we approximate the predicted nominal parameters by the current estimates.
Updating these estimates based on the expected value of future data involves difficult integrals that must be computed numerically, which would destroy convexity, cf. \cite{lobo1999policies}.
The predicted `uncertainty matrix' $\auncert$ is defined as follows.
Given $\data_\timexp$, $\uncert$ at time $\timexp$ can be computed as
$\uncert_1 + 
\frac{1}{\diststd^2\credconst}
\sum_{t=1}^{\timexp}\left[ \begin{array}{c}
x_t \\ u_t
\end{array}\right]
\left[ \begin{array}{c}
x_t \\ u_t
\end{array}\right]\trnsp$.
We can approximate the empirical covariance with stationary distribution over $x$ and $u$; i.e., as with the stationary approximation of the cost in \eqref{eq:h2_cost}, we have:
$
\sum_{t=1}^{\timexp}\left[ \begin{array}{c}
x_t \\ u_t
\end{array}\right]
\left[ \begin{array}{c}
x_t \\ u_t
\end{array}\right]\trnsp
\approx
\timexp\ev
\left[ \begin{array}{c}
x_t \\ u_t
\end{array}\right]
\left[ \begin{array}{c}
x_t \\ u_t
\end{array}\right]\trnsp
=
\timexp\diststd^2 \xuresp\trnsp\xuresp.
$
We then define $\auncert\defeq\uncert_1 + \frac{\timexp}{\credconst}\xuresp\trnsp\xuresp$.

\paragraph{Convex relaxation of dual control}
By substituting the approximate model $\amodel(\gpoli)$ for $\model(\data_0\cup\data_{\timexp})$ in \eqref{eq:compact_dual}, we can remove the dependence of the cost on unknown future data.
Furthermore, observe that (for fixed $\multiplier$) the LMI constraint in \eqref{eq:robust_synth} is linear in $\uncert$.
This implies that we can optimize over $\uncert$ and $\genpolicy$ jointly, as a convex program.
Unfortunately, $\auncert$ in $\amodel(\gpoli)$ is a quadratic function of $\gpoli$, and so directly substituting $\uncert\rightarrow\auncert$ results in a non-convex matrix inequality.

To circumvent this difficulty, we introduce the following linear approximation of $\auncert$:
\begin{equation}
\uncertlin \defeq \uncert_1 + \frac{\timexp}{\credconst}\left( \xuresp\trnsp\xulp + \xulp\trnsp\xuresp - \xulp\trnsp\xulp\right), \quad \xulp=\arg\min_\policy \robustcost(\policy,\model(\data_0)).
\end{equation}
$\uncertlin$ is nothing more than a first-order approximation of $\auncert$, linearized at $\xulp$.
We could linearize at any arbitrary point; however, the solution $\xulp$ to the nominal robust control problem given $\data_0$ is a natural choice.
Furthermore, $\uncertlin\mle\auncert$, as $(\xuresp-\xulp)\trnsp(\xuresp-\xulp)\mge0\iff \xuresp\trnsp\xuresp\mge\xuresp\trnsp\xulp + \xulp\trnsp\xuresp - \xulp\trnsp\xulp$.
We now present the main contribution of this section:
given the initial model $\model(\data_0)=\lbr \anom_1,\bnom_1,\uncert_1\rbr$,
%problem \eqref{eq:compact_dual} can be approximated by the convex program
the problem 
$\min_{\gpoli} \ \timexp\times\robustcost(\gpoli,\model(\data_0)) + (\totaltime-\timexp)\times \robustcostopt(\amodel(\gpoli))$
admits the following convex upper bound:
\begin{subequations}\label{eq:final_convex_prog}
\begin{align}
& \min_{\xfirzt^1,\ufirzt^1,\xfirzt^2,\ufirzt^2,\cert^1,\cert^2,\lambda^1} \  \timexp\times\htwocost(\xfirzt^1,\ufirzt^1) + (\totaltime-\timexp)\times \htwocost(\xfirzt^2,\ufirzt^2) \\
\st \quad & \lbr \xfirzt^1,\ufirzt^1\rbr \textup{ and }  \lbr\xfirzt^2,\ufirzt^2\rbr \textup{ each satisfy } \eqref{eq:affine_nominal} \textup{ with } A=\anom_1, B=\bnom_1 \\
& \cert^1, \cert^2 \textup{ each satisfy } \eqref{eq:hinf_bound_cond} \\
& \left[\begin{array}{ccc}
\cert^1 & 0 & \xuresp^1 \\ 
0 & (1-\multiplier^1)I & 0 \\
(\xuresp^1)\trnsp & 0 & \multiplier^1\uncert_1
\end{array}\right] \mge 0,
\left[\begin{array}{ccc}
\cert^2 & 0 & \xuresp^2 \\ 
0 & (1-\multiplier^2)I & 0 \\
(\xuresp^2)\trnsp & 0 & \multiplier^2\uncertlin
\end{array}\right] \mge 0\label{eq:final_lmi}
\end{align}
\end{subequations}
where $\xuresp^{i}$ is defined analogously to $\xuresp$ in Theorem \ref{thm:robust_synth}, for $i=1,2$.
As $\uncertlin\mle\auncert$, the feasible set for \eqref{eq:final_convex_prog} with $\uncertlin$ (in \eqref{eq:final_lmi}) is smaller than that of the program with (quadratic) $\auncert$. Therefore, \eqref{eq:final_convex_prog} constitutes an upper bound.
Notice that \eqref{eq:final_convex_prog} is only convex for fixed $\multiplier^2$, due to bilinearity with $\uncertlin$.
In practice, one has to grid search over the scalar parameter $\multiplier_2$.

\subsection{Discussion}\label{sec:discussion}
In this section, we discuss the partitioning of $[1,\totaltime]$ into two sub-intervals.
A downside of this approach is that it requires the user to explicitly specify the `exploration' period, i.e., select $\timexp$.
The proposed approach could also be considered a `one-step-look-ahead' dual control, as there is only a single period of exploration ($[1,\timexp]$), before a single period of exploitation ($[\timexp,\totaltime]$).
Such a drawback can be partially mitigated by adopting a 'multistep-look-ahead' strategy, as in \cite{umenberger2019robust}. 
In such a framework, the current period of exploration is followed by further exploration, rather than pure exploitation.
This approach also requires the user to select `epoch times', at which the controller will be updated.
To circumvent this, one could adopt a model predictive control (MPC) strategy, with SLS as in \cite{wang2019robust}, but exploiting the dual control effect, as in \cite{lobo1999policies}.
The major obstacle to extending the proposed approach to the multistep or MPC setting is the need to search over more than one multiplier, $\multiplier_2$, cf. \eqref{eq:final_lmi}.
Such extensions represent interesting directions for future research.
At any rate, the method proposed in this paper could be used (with fixed multipliers) as a convex means of providing sub-optimal initialization for local search methods.
%The problem formulation adopted in this paper, with a single scalar parameter to grid search over, 

\section{Numerical illustration}\label{sec:example}
In this section we illustrate the proposed method with a numerical example.
Consider the linear control problem with parameters:
\begin{equation*}
\atr = \left[ \begin{array}{cc}
0.5 & 1.1 \\ 0 & 0.8
\end{array} \right], \
\btr = \left[ \begin{array}{cc}
1 & 0 \\ 0 & 1
\end{array} \right], \
\qcost= \left[ \begin{array}{cc}
1 & 0 \\ 0 & 0.001
\end{array} \right], \
\qcost= \left[ \begin{array}{cc}
10^3 & 0 \\ 0 & 10^3
\end{array} \right], \
\diststd = 1, \credprob=0.1
\end{equation*}
Initial data $\data_0$ is generated by simulating \eqref{eq:lti} open-loop with $u_t\sim\normal{0}{I}$ for $t=6$ timesteps; this is repeated 10 times.
For control, $\totaltime=100$, $\timexp$, and $\firl=12$.
We compare three methods:
i) nominal control: $\gpoli=\arg\min_\genpolicy\robustcost(\genpolicy,\model(\data_0))$ and $\gpolii=\arg\min_\genpolicy\robustcost(\genpolicy,\model(\data_0\cup\data_{\timexp}))$, i.e., no explicit exploration;
ii) dual control: as proposed in this paper;
iii) greedy control: $\gpoli$ is given by $u=\kzt x+e$ where $\kzt$ is the nominal controller (i), and $e\sim\normal{0}{\sigma I}$. $\sigma$ is tuned to give the same exploration cost as dual control on the true, and represents a `greedy strategy' of injecting as much `exploration signal' $e$ into the input as possible, as opposed to targeting uncertainty reduction in specific parameters. $\gpolii$ is given by the `nominal' control. This experiment is repeated 1000 times, with the results presented in Fig. \ref{fig:main_res}.
The greedy strategy performs slightly better during exploitation, but this cannot offset the cost of exploration, leading to worse performance in terms of total cost.
%Note that the greedy exploration signal has been tuned so that both methods achieve exactly the same exploration cost.
Dual control balances exploration and exploitation to achieve the lowest total cost.	

\begin{figure}[H]
	\centering
	\includegraphics[width=0.9\linewidth]{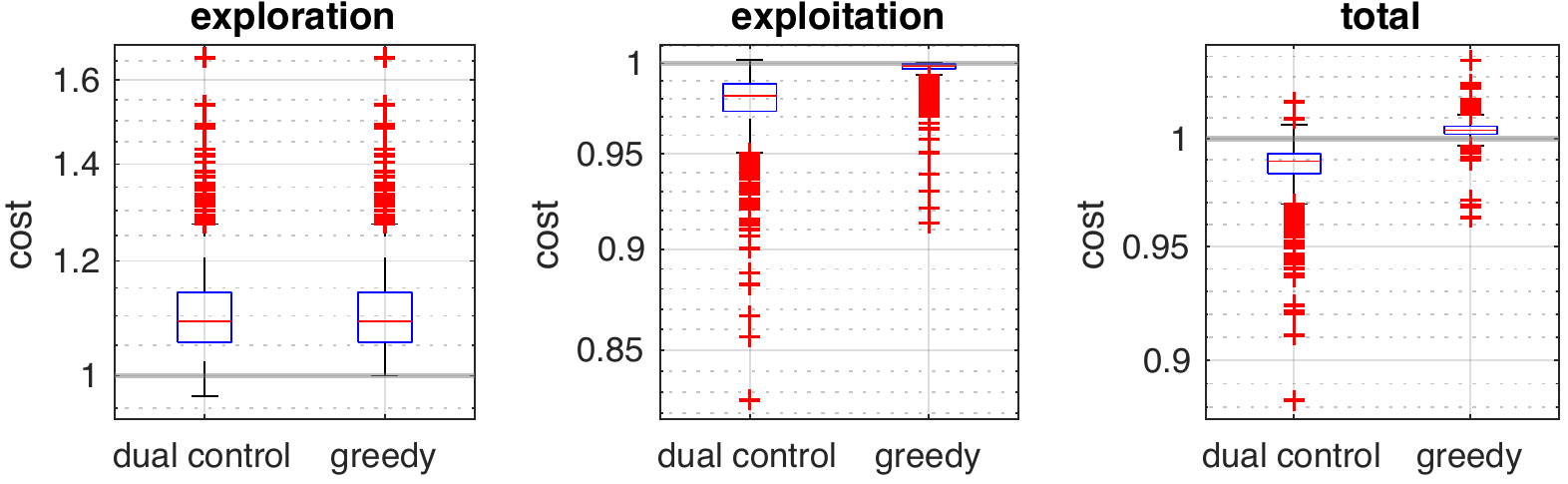}
	
%	\subfloat[]{
%		\includegraphics[width=0.5\linewidth]{figures/box_cube}} 
%	\subfloat[]{
%	\includegraphics[width=0.5\linewidth]{figures/box_cube}} 	
	\caption{Costs during exploration ($t\in[1,\timexp]$), exploitation ($t\in[\timexp,\totaltime]$), and the total cost (exploration $+$ exploitation). Costs are normalized by the cost of the nominal control (i.e. unity implies the same cost as the nominal control). Dual control exhibits best performance.
	}
	\label{fig:main_res}
\end{figure}

%\section{Conclusion}

% Acknowledgments---Will not appear in anonymized version
\acks{This research was financially supported by the Swedish Foundation for Strategic Research (SSF) via the project \emph{ASSEMBLE} (contract number: RIT15-0012) and by the project \emph{NewLEADS - New Directions in Learning Dynamical Systems} (contract number: 621-2016-06079), funded by the Swedish Research Council.}


\begin{thebibliography}{29}
\providecommand{\natexlab}[1]{#1}
\providecommand{\url}[1]{\texttt{#1}}
\expandafter\ifx\csname urlstyle\endcsname\relax
  \providecommand{\doi}[1]{doi: #1}\else
  \providecommand{\doi}{doi: \begingroup \urlstyle{rm}\Url}\fi

\bibitem[Abbasi-Yadkori and Szepesv{\'a}ri(2011)]{abbasi2011regret}
Yasin Abbasi-Yadkori and Csaba Szepesv{\'a}ri.
\newblock Regret bounds for the adaptive control of linear quadratic systems.
\newblock In \emph{Proceedings of the 24th Annual Conference on Learning
  Theory}, pages 1--26, 2011.

\bibitem[Abeille and Lazaric(2017)]{abeille2017thompson}
Marc Abeille and Alessandro Lazaric.
\newblock Thompson {S}ampling for linear-quadratic control problems.
\newblock In \emph{International Conference on Artificial Intelligence and
  Statistics ({AISTATS})}, 2017.

\bibitem[Abeille and Lazaric(2018)]{abeille2018improved}
Marc Abeille and Alessandro Lazaric.
\newblock Improved regret bounds for thompson sampling in linear quadratic
  control problems.
\newblock In \emph{International Conference on Machine Learning}, pages 1--9,
  2018.

\bibitem[Agarwal et~al.(2019{\natexlab{a}})Agarwal, Bullins, Hazan, Kakade, and
  Singh]{agarwal2019online}
Naman Agarwal, Brian Bullins, Elad Hazan, Sham~M Kakade, and Karan Singh.
\newblock Online control with adversarial disturbances.
\newblock \emph{arXiv preprint arXiv:1902.08721}, 2019{\natexlab{a}}.

\bibitem[Agarwal et~al.(2019{\natexlab{b}})Agarwal, Hazan, and
  Singh]{agarwal2019logarithmic}
Naman Agarwal, Elad Hazan, and Karan Singh.
\newblock Logarithmic regret for online control.
\newblock In \emph{Advances in Neural Information Processing Systems}, pages
  10175--10184, 2019{\natexlab{b}}.

\bibitem[Anderson et~al.(2019)Anderson, Doyle, Low, and
  Matni]{anderson2019system}
James Anderson, John~C Doyle, Steven~H Low, and Nikolai Matni.
\newblock System level synthesis.
\newblock \emph{Annual Reviews in Control}, 2019.

\bibitem[Cohen et~al.(2018)Cohen, Hassidim, Koren, Lazic, Mansour, and
  Talwar]{cohen2018online}
Alon Cohen, Avinatan Hassidim, Tomer Koren, Nevena Lazic, Yishay Mansour, and
  Kunal Talwar.
\newblock Online linear quadratic control.
\newblock \emph{arXiv preprint arXiv:1806.07104}, 2018.

\bibitem[Dean et~al.(2017)Dean, Mania, Matni, Recht, and Tu]{dean2017sample}
Sarah Dean, Horia Mania, Nikolai Matni, Benjamin Recht, and Stephen Tu.
\newblock On the sample complexity of the linear quadratic regulator.
\newblock \emph{to appear in Foundations of Computational Mathematics
  (arXiv:1710.01688)}, 2017.

\bibitem[Dean et~al.(2018)Dean, Mania, Matni, Recht, and Tu]{dean2018regret}
Sarah Dean, Horia Mania, Nikolai Matni, Benjamin Recht, and Stephen Tu.
\newblock Regret bounds for robust adaptive control of the linear quadratic
  regulator.
\newblock In \emph{Advances in Neural Information Processing Systems}, pages
  4192--4201, 2018.

\bibitem[Dean et~al.(2019)Dean, Tu, Matni, and Recht]{dean2018safely}
Sarah Dean, Stephen Tu, Nikolai Matni, and Benjamin Recht.
\newblock Safely learning to control the constrained linear quadratic
  regulator.
\newblock \emph{to appear at American Control Conference (arXiv:1809.10121)},
  2019.

\bibitem[Dumitrescu(2007)]{dumitrescu2007positive}
Bogdan Dumitrescu.
\newblock \emph{Positive trigonometric polynomials and signal processing
  applications}, volume 103.
\newblock Springer, 2007.

\bibitem[Faradonbeh et~al.(2019)Faradonbeh, Tewari, and
  Michailidis]{faradonbeh2017finite}
Mohamad Kazem~Shirani Faradonbeh, Ambuj Tewari, and George Michailidis.
\newblock Finite time adaptive stabilization of {LQ} systems.
\newblock \emph{IEEE Transactions on Automatic Control}, 2019.
\newblock accepted.

\bibitem[Fazel et~al.(2018)Fazel, Ge, Kakade, and Mesbahi]{fazel2018global}
Maryam Fazel, Rong Ge, Sham~M Kakade, and Mehran Mesbahi.
\newblock Global convergence of policy gradient methods for the linear
  quadratic regulator.
\newblock \emph{arXiv preprint arXiv:1801.05039}, 2018.

\bibitem[Feldbaum(1960)]{feldbaum1960dual}
AA~Feldbaum.
\newblock Dual control theory. i.
\newblock \emph{Avtomatika i Telemekhanika}, 21\penalty0 (9):\penalty0
  1240--1249, 1960.

\bibitem[Ferizbegovic et~al.(2019)Ferizbegovic, Umenberger, Hjalmarsson, and
  Sch{\"o}n]{ferizbegovic2019learning}
Mina Ferizbegovic, Jack Umenberger, H{\aa}kan Hjalmarsson, and Thomas~B
  Sch{\"o}n.
\newblock Learning robust lq-controllers using application oriented
  exploration.
\newblock \emph{IEEE Control Systems Letters}, 4\penalty0 (1):\penalty0 19--24,
  2019.

\bibitem[Iannelli et~al.(2019)Iannelli, Khosravi, and
  Smith]{iannelli2019structured}
Andrea Iannelli, Mohammad Khosravi, and Roy~S Smith.
\newblock Structured exploration in the finite horizon linear quadratic dual
  control problem.
\newblock \emph{arXiv preprint arXiv:1910.14492}, 2019.

\bibitem[Ibrahimi et~al.(2012)Ibrahimi, Javanmard, and
  Roy]{ibrahimi2012efficient}
Morteza Ibrahimi, Adel Javanmard, and Benjamin~V Roy.
\newblock Efficient reinforcement learning for high dimensional linear
  quadratic systems.
\newblock In \emph{Advances in Neural Information Processing Systems}, pages
  2636--2644, 2012.

\bibitem[Lobo and Boyd(1999)]{lobo1999policies}
Miguel~Sousa Lobo and Stephen Boyd.
\newblock Policies for simultaneous estimation and optimization.
\newblock In \emph{Proceedings of the 1999 American Control Conference (Cat.
  No. 99CH36251)}, volume~2, pages 958--964. IEEE, 1999.

\bibitem[Luo et~al.(2004)Luo, Sturm, and Zhang]{luo2004multivariate}
Zhi-Quan Luo, Jos~F Sturm, and Shuzhong Zhang.
\newblock Multivariate nonnegative quadratic mappings.
\newblock \emph{SIAM Journal on Optimization}, 14\penalty0 (4):\penalty0
  1140--1162, 2004.

\bibitem[Malik et~al.(2018)Malik, Pananjady, Bhatia, Khamaru, Bartlett, and
  Wainwright]{malik2018derivative}
Dhruv Malik, Ashwin Pananjady, Kush Bhatia, Koulik Khamaru, Peter~L Bartlett,
  and Martin~J Wainwright.
\newblock Derivative-free methods for policy optimization: Guarantees for
  linear quadratic systems.
\newblock \emph{arXiv preprint arXiv:1812.08305}, 2018.

\bibitem[Mania et~al.(2019)Mania, Tu, and Recht]{mania2019certainty}
Horia Mania, Stephen Tu, and Benjamin Recht.
\newblock Certainty equivalent control of lqr is efficient.
\newblock \emph{arXiv preprint arXiv:1902.07826}, 2019.

\bibitem[Matni et~al.(2019)Matni, Proutiere, Rantzer, and Tu]{matni2019self}
Nikolai Matni, Alexandre Proutiere, Anders Rantzer, and Stephen Tu.
\newblock From self-tuning regulators to reinforcement learning and back again.
\newblock \emph{arXiv preprint arXiv:1906.11392}, 2019.

\bibitem[Mnih et~al.(2015)Mnih, Kavukcuoglu, Silver, Rusu, Veness, Bellemare,
  Graves, Riedmiller, Fidjeland, Ostrovski, et~al.]{mnih2015human}
Volodymyr Mnih, Koray Kavukcuoglu, David Silver, Andrei~A Rusu, Joel Veness,
  Marc~G Bellemare, Alex Graves, Martin Riedmiller, Andreas~K Fidjeland, Georg
  Ostrovski, et~al.
\newblock Human-level control through deep reinforcement learning.
\newblock \emph{Nature}, 518\penalty0 (7540):\penalty0 529, 2015.

\bibitem[Ouyang et~al.(2017)Ouyang, Gagrani, and Jain]{ouyang2017learning}
Yi~Ouyang, Mukul Gagrani, and Rahul Jain.
\newblock Learning-based control of unknown linear systems with thompson
  sampling.
\newblock \emph{submitted to IEEE Transactions on Automatic Control
  (arXiv:1709.04047)}, 2017.

\bibitem[Silver et~al.(2016)Silver, Huang, Maddison, Guez, Sifre, Van
  Den~Driessche, Schrittwieser, Antonoglou, Panneershelvam, Lanctot,
  et~al.]{silver2016mastering}
David Silver, Aja Huang, Chris~J Maddison, Arthur Guez, Laurent Sifre, George
  Van Den~Driessche, Julian Schrittwieser, Ioannis Antonoglou, Veda
  Panneershelvam, Marc Lanctot, et~al.
\newblock Mastering the game of {G}o with deep neural networks and tree search.
\newblock \emph{Nature}, 529\penalty0 (7587):\penalty0 484, 2016.

\bibitem[Umenberger and Sch{\"o}n(2018)]{umenberger2018learning}
Jack Umenberger and Thomas~B Sch{\"o}n.
\newblock Learning convex bounds for linear quadratic control policy synthesis.
\newblock In \emph{Advances in Neural Information Processing Systems}, pages
  9584--9595, 2018.

\bibitem[Umenberger et~al.(2019)Umenberger, Ferizbegovic, Sch{\"o}n, and
  Hjalmarsson]{umenberger2019robust}
Jack Umenberger, Mina Ferizbegovic, Thomas~B Sch{\"o}n, and H{\aa}kan
  Hjalmarsson.
\newblock Robust exploration in linear quadratic reinforcement learning.
\newblock \emph{arXiv preprint arXiv:1906.01584}, 2019.

\bibitem[Wang et~al.(2019{\natexlab{a}})Wang, Chen, Preciado, and
  Matni]{wang2019robust}
Han Wang, Shaoru Chen, Victor~M Preciado, and Nikolai Matni.
\newblock Robust model predictive control via system level synthesis.
\newblock \emph{arXiv preprint arXiv:1911.06842}, 2019{\natexlab{a}}.

\bibitem[Wang et~al.(2019{\natexlab{b}})Wang, Matni, and Doyle]{wang2019system}
Yuh-Shyang Wang, Nikolai Matni, and John~C Doyle.
\newblock A system level approach to controller synthesis.
\newblock \emph{IEEE Transactions on Automatic Control}, 2019{\natexlab{b}}.

\end{thebibliography}
\end{document}